\newtheorem{thm}{Theorem}
\newtheorem{cor}{Corollary}
\newtheorem{lem}{Lemma}
\newtheorem{rem}{Remark}
\newtheorem{dfn}{Definition}
\newcommand{\C}{\mathbb{C}}
\title{Spectral Lower Bounds for the Quantum Chromatic Number of a Graph \\ Part II}
\author{
Pawel Wocjan\thanks{\texttt{Pawel.Wocjan@ibm.com},
IBM Quantum, IBM T.J. Watson Research Center, Yorktown Heights, NY 10598, USA} \quad
Clive Elphick\thanks{\texttt{clive.elphick@gmail.com}, School of Mathematics, University of Birmingham, Birmingham, UK} \quad 
Parisa Darbari\thanks{\texttt{pdarbar@Knights.ucf.edu}, Department of Computer Science, University of Central Florida, USA}
}
\begin{document}

\maketitle

\begin{abstract}
Hoffman proved that a graph $G$ with eigenvalues $\mu_1 \ge \ldots \ge \mu_n$ and chromatic number $\chi(G)$ satisfies:
\[
\chi \ge 1 + \kappa
\] 
where $\kappa$ is the smallest integer such that 
\[
\mu_1 + \sum_{i=1}^{\kappa} \mu_{n+1-i} \le 0.
\]
We strengthen this well known result by proving that $\chi(G)$ can be replaced by the quantum chromatic number, $\chi_q(G)$, where for all graphs $\chi_q(G) \le \chi(G)$ and for some graphs $\chi_q(G)$ is significantly smaller than $\chi(G)$. We also prove a similar result, and investigate implications of these inequalities for the quantum chromatic number of various classes of graphs, which improves many known results.
For example, we demonstrate that the Kneser graph $KG_{p,2}$ has $\chi_q = \chi = p - 2$.
\end{abstract}

\section{Introduction}

For any undirected and connected graph $G$ let $V$ denote the set of vertices where $|V| = n$, $A$ denote the adjacency matrix,  $\mu_{\max} = \mu_1 \ge \mu_2 \ge \ldots \ge \mu_n$ denote the eigenvalues of $A$. The multiplicity of the maximum eigenvalue $\mu_{\max}$ is $1$ since $G$ is connected.\footnote{The Perron-Frobenius theorem implies that the maximum eigenvalue $\mu_{\max}$ has multiplicity $1$ because the adjacency matrix $A$ of the connected graph $G$ is irreducible.} Let $z$ denote the corresponding (unit) eigenvector.

Let $\chi(G)$ denote the chromatic number and $\omega(G)$ the clique number. Let $\chi_q(G)$ denote the quantum chromatic number, as defined by Cameron \emph{et al} \cite{cameron07}. It is known that $\chi_q(G) \le \chi(G)$, and Mancinska and Roberson \cite{mancinska162}  found a graph on 14 vertices with $\chi(G) > \chi_q(G)$, which they suspect is the smallest possible example. There exist graphs for which $\chi_q(G)$ is significantly smaller than $\chi(G)$. 

Elphick and Wocjan \cite{elphick19} proved that many spectral lower bounds for $\chi(G)$ are also lower bounds for $\chi_q(G)$, using linear algebra techniques called pinching and twirling. In this paper, we prove that stronger lower bounds on the chromatic number are also lower bound on the quantum chromatic number.

The following purely combinatorial definition of  the quantum chromatic number is due to \cite[Definition 1]{mancinska162}. For $d>0$, let $I_d$ and $0_d$ denote the identity and zero matrices in $\C^{d\times d}$. For $c>0$, let $[c]=\{1,\ldots,c\}$.
 
\begin{dfn}[Quantum $c$-coloring]\label{def:Q-coloring}
A quantum $c$-coloring of the graph $G=(V,E)$  is a collection of orthogonal projectors $\{ P_{v,k} : v\in V, k\in [c]\}$ in 
$\C^{d\times d}$ such that
\begin{itemize}
\item for all vertices $v\in V$
\begin{eqnarray}
\sum_{k\in[c]} P_{v,k} & = & I_d \quad\quad \mathrm{(completeness)} \label{eq:complete}
\end{eqnarray}
\item for all edges $vw\in E$ and for all $k\in[c]$
\begin{eqnarray}
P_{v,k} P_{w,k} & = & 0_d \quad\quad \mathrm{(orthogonality)} \label{eq:orthogonal}
\end{eqnarray}
\end{itemize}
The quantum chromatic number $\chi_q(G)$ is the smallest $c$ for which the graph $G$ admits a quantum $c$-coloring for some dimension $d>0$.
\end{dfn}

According to the above definition, any classical $c$-coloring can be viewed as a $1$-dimensional quantum coloring, where we set $P_{v,k}=1$ if vertex $v$ has color $k$ and $P_{v,k}=0$, otherwise. Therefore, quantum coloring is a relaxation of  classical coloring. As noted in \cite{mancinska162}, it is surprising that the quantum chromatic number can be strictly and even exponentially smaller than the chromatic number for certain families of graphs.

We use the following alternative characterization of the quantum chromatic number due to \cite{elphick19}. Before stating this characterization, we briefly review the definition of pinching. 

\begin{dfn}[Pinching]
    Let $P_k\in\C^{m\times m}$ for $k\in [c]$ be orthogonal projectors such that they form a resolution of the identity, that is, 
    \begin{equation}
        \sum_{k\in[c]} P_k = I_m.
    \end{equation}
    The operation $\mathcal{D} : \C^{m\times m} \rightarrow \C^{m\times m}$ defined by 
    \begin{equation}
        X \mapsto \mathcal{D}(X) = \sum_{k\in [c]} P_k X P_k
    \end{equation}
    is called pinching. We say that it annihilates $X$ if $\mathcal{D}(X)=0$. 
\end{dfn}

The following theorem is proved as Theorem~1 in \cite{elphick19}. For the sake of completeness, we include a condensed proof below. We use $\{e_v : v \in V\}$ to denote the standard basis in $\C^n$.

\begin{thm}\label{thm:quantum_pinching}
Let $\{ P_{v,k} : v\in V, k\in [c]\}$ be an arbitrary quantum $c$-coloring of $G$ in $\mathbb{C}^d$. Then, the following block-diagonal projectors
\begin{align}
    P_k &= \sum_{v\in V} e_v e_v^\dagger \otimes P_{v,k} \in \mathbb{C}^{n\times n} \otimes \mathbb{C}^{d\times d}    
\end{align}
define a pinching operation that annihilates $A\otimes I_d$, that is,
\begin{equation}
    \sum_{k\in[c]} P_k (A\otimes I_d) P_k = 0.
\end{equation}
\end{thm}
\begin{proof}
We have
\begin{align}
    \sum_{k\in [c]} P_k 
    &= 
    \sum_{k\in [c]} \sum_{v\in V} e_v e_v^\dagger \otimes P_{v,k} \\
    &=
    \sum_{v\in V} e_v e_v^\dagger \otimes \sum_{k\in [c]} P_{v,k} \\
    &=
    \sum_{v\in V} e_v e_v^\dagger \otimes I_d \\
    &= I_n \otimes I_d.
\end{align}
This shows that the orthogonal projectors $P_k$ form a resolution of the identity, that is, form a pinching operation.

For $v,w\in V$, let $a_{vw}$ denote the entries of the adjacency matrix. For $k\in[c]$, we have
\begin{equation}
    P_k(A\otimes I_d)P_k = \sum_{v,w\in V} a_{vw} \cdot e_v e_w^\dagger \otimes P_{v,k} P_{w,k}.
\end{equation}
Whenever $a_{vw}=1$, or equivalently $vw\in E$, the corresponding orthogonal projectors $P_{v,k}$ and $P_{w,k}$ must be orthogonal. This shows that the above sum is equal to $0_d$, that is, the corresponding pinching operation annihilates $A\otimes I_d$.
\end{proof}

\begin{rem}
The classical case corresponds simply to the special case $d=1$. In this case, the projectors $P_k$ are a diagonal in the standard basis of $\C^n$ and each projector corresponds to a color class, that is, each projector $P_k$ projects onto the subspace spanned by the standard basis vectors corresponding to the vertices that have been colored with the $k$th color.
\end{rem}

%%%

\section{New bounds for the quantum chromatic number}

We use $\uparrow$ to indicate that the eigenvalues are sorted in increasing order.  The $i$th smallest eigenvalue of a hermitian matrix $X\in\C^{n\times n}$ is denoted by $\mu_i^\uparrow(X)$ so that
\begin{align}
    \mu_1^\uparrow(X) \le \mu_2^\uparrow(X) \le \ldots \le \mu_n^\uparrow(X).
\end{align}
Similarly, we use $\downarrow$ to indicate that the eigenvalues are sorted in decreasing order.  The $i$th largest eigenvalue of $X$ is denoted by $\mu_i^\downarrow(X)$ so that
\begin{align}
    \mu_1^\downarrow(X) \ge \mu_2^\downarrow(X) \ge \ldots \ge \mu_n^\downarrow(X).
\end{align}
When the eigenvalues are sorted in decreasing order, we often omit $\downarrow$ so that
\begin{align}
    \mu_1(X) \ge \mu_2(X) \ge \ldots \ge \mu_n(X).
\end{align}
We have $\mu^\uparrow_i(X)=\mu_{n+1-i}(X)$.  It would be somewhat inconvenient to always have to write $n+i-1$, so this is why the $\uparrow$ notation simplifies the indices.

The following lemma is a standard interlacing result in matrix analysis. We describe it in detail since it is the main result that we rely on to prove the new stronger bounds on the quantum chromatic number.

\begin{lem}\label{lem:eig_eq1} Let $X\in\C^{n\times n}$ be a hermitian matrix and let $S\in\C^{n\times m}$ be a matrix such that $S^\dagger S=I_m$, that is, its $m$ column vectors $s_1,\ldots,s_m$ are orthonormal vectors. Then,
\begin{equation}
    \mu_i^\uparrow(X) \le \mu_i^\uparrow(S^\dagger X S) \\
\end{equation}
for $i\in[m]$.
\end{lem}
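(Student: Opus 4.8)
The plan is to invoke the Courant--Fischer min--max characterization of eigenvalues and to exploit the fact that $S$ embeds $\C^m$ isometrically into $\C^n$. Recall that for a hermitian $X\in\C^{n\times n}$ the $i$-th smallest eigenvalue is
\[
\mu_i^\uparrow(X) = \min_{\dim U = i}\ \max_{0\neq x\in U} \frac{x^\dagger X x}{x^\dagger x},
\]
where $U$ ranges over all $i$-dimensional subspaces of $\C^n$, and the analogous formula holds for $Y := S^\dagger X S$ with $U$ replaced by $i$-dimensional subspaces $W\subseteq\C^m$.

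First I would record the key geometric fact: since $S^\dagger S=I_m$, the map $w\mapsto Sw$ preserves inner products, hence is injective and sends any $i$-dimensional subspace $W\subseteq\C^m$ to an $i$-dimensional subspace $SW\subseteq\C^n$. Moreover, writing $x=Sw$, we have $x^\dagger x = w^\dagger S^\dagger S w = w^\dagger w$ and $x^\dagger X x = w^\dagger S^\dagger X S w = w^\dagger Y w$, so the Rayleigh quotient of $X$ at $x$ equals the Rayleigh quotient of $Y$ at $w$. Consequently the maximum of the Rayleigh quotient of $X$ over $SW$ coincides with the maximum of the Rayleigh quotient of $Y$ over $W$.

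Next I would combine these. Fix any $i$-dimensional $W\subseteq\C^m$. Because $SW$ is one particular $i$-dimensional subspace of $\C^n$, the min--max formula for $X$ gives
\[
\mu_i^\uparrow(X) \ \le\ \max_{0\neq x\in SW}\frac{x^\dagger X x}{x^\dagger x} \ =\ \max_{0\neq w\in W}\frac{w^\dagger Y w}{w^\dagger w}.
\]
Since this holds for every $i$-dimensional $W$, taking the minimum over all such $W$ on the right-hand side yields $\mu_i^\uparrow(X)\le\mu_i^\uparrow(Y)=\mu_i^\uparrow(S^\dagger X S)$, as claimed.

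The step requiring the most care is the direction of the inequality: the minimization defining $\mu_i^\uparrow(X)$ ranges over \emph{all} $i$-dimensional subspaces of $\C^n$, a strictly larger family than the image subspaces $SW$, so it can only decrease the value --- this is precisely what produces $\mu_i^\uparrow(X)\le\mu_i^\uparrow(Y)$ rather than an equality or the reverse inequality. An alternative route is to complete $S$ to a unitary and invoke the Cauchy/Poincar\'e interlacing theorem, but the min--max argument is self-contained and avoids the bookkeeping of the interlacing indices.
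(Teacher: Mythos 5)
Your proof is correct and follows essentially the same route as the paper: both invoke the Courant--Fischer min--max characterization and observe that conjugating by the isometry $S$ restricts the minimization to the subfamily of subspaces of the form $SW$, which can only increase the minimum. Your write-up simply makes explicit the isometry and Rayleigh-quotient bookkeeping that the paper leaves implicit.
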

\begin{proof}
This follows from the Courant-Weyl-Fisher theorem stating
\begin{equation}
    \mu_i^\uparrow(X) = \min_{\mathcal{M}_i} \max_{x\in\mathcal{M}_i} x^\dagger X x,
\end{equation}
where the minimum is taken over subspaces $\mathcal{M}_i$ of dimension $i$ and the maximum is taken over unit vectors $x\in\mathcal{M}_i$. By multiplying $X$ by $S^\dagger$ and $S$ from the left and right, respectively, we effectively restrict the subspaces $\mathcal{M}_i$ to have
the form
\begin{equation}
    \mathcal{M}_i = \{ S y : y \in \mathcal{N}_i \}
\end{equation}
where $\mathcal{N}_i$ is a subspace of $\C^m$ of dimension $i\in [m]$.
\end{proof}

We obtain the following corollary by using the identity $\mu_i^\uparrow(-X)=-\mu_i(X)$ and applying the above lemma to the matrix $-X$.

\begin{cor}\label{cor:eig_eq2}
Let $X\in\C^{n\times n}$ and $S\in\C^{n\times m}$ with $S^\dagger S=I_m$ as in Lemma \ref{lem:eig_eq1}. Then
\begin{equation}
    \mu_i(X) \ge \mu_i(S^\dagger X S).
\end{equation}
\end{cor}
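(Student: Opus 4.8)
The plan is to deduce this directly from Lemma~\ref{lem:eig_eq1} by passing to the matrix $-X$, exactly as the surrounding text hints. The essential point is that the lemma is phrased for the increasing ordering $\mu_i^\uparrow$, whereas the corollary is phrased for the decreasing ordering $\mu_i$, and negation of a hermitian matrix interchanges these two orderings. So the entire task is to translate one convention into the other and track the induced sign flip.

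First I would observe that, since $X$ is hermitian, so is $-X$, while the hypothesis $S^\dagger S = I_m$ is untouched. Hence Lemma~\ref{lem:eig_eq1} applies verbatim to $-X$, yielding $\mu_i^\uparrow(-X) \le \mu_i^\uparrow(S^\dagger(-X)S)$ for every $i\in[m]$.

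Next I would record the elementary spectral identity that drives the argument. The eigenvalues of $-X$ are the negatives of those of $X$, and multiplication by $-1$ reverses their order, so the $i$th smallest eigenvalue of $-X$ equals the negative of the $i$th largest eigenvalue of $X$; that is, $\mu_i^\uparrow(-X) = -\mu_i(X)$. Applying the same identity to the $m\times m$ compression, and using $S^\dagger(-X)S = -(S^\dagger X S)$, gives $\mu_i^\uparrow(S^\dagger(-X)S) = -\mu_i(S^\dagger X S)$.

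Finally I would substitute these two identities into the inequality from the first step to obtain $-\mu_i(X) \le -\mu_i(S^\dagger X S)$, and multiply through by $-1$, which reverses the inequality and produces $\mu_i(X) \ge \mu_i(S^\dagger X S)$, as claimed. There is no genuine obstacle here: the only points requiring care are the bookkeeping between the two ordering conventions and the direction reversal in the final step, together with the observation that the index range $i\in[m]$ is inherited unchanged from the lemma because $S^\dagger X S$ is an $m\times m$ matrix.
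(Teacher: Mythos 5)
Your proof is correct and follows exactly the paper's own route: apply Lemma~\ref{lem:eig_eq1} to $-X$ and use the identity $\mu_i^\uparrow(-X)=-\mu_i(X)$ together with $S^\dagger(-X)S=-(S^\dagger X S)$ to flip the ordering and the inequality. You have simply spelled out the bookkeeping that the paper leaves implicit.
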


In the following, we will only use this corollary with $i=1$, that is,
\begin{equation}
    \mu_{\max}(X) \ge \mu_{\max}(S^\dagger X S). 
\end{equation}

\subsection{First new bound for \texorpdfstring{$\chi_q(G)$}{Lg}}

\begin{thm}[First bound on quantum chromatic number] \label{thm:firstbound}
Let $\chi_q(G)$ be the quantum chromatic number of a connected graph $G$ with adjacency matrix $A$.
Let $\kappa$ be the smallest integer such that
\begin{equation}
0 \ge \mu_{\max}(A) + \sum_{i=1}^\kappa \mu^\uparrow_{i}(A) 
\end{equation}
holds. Then the quantum chromatic number is bounded from below by
\begin{align}
    \chi_q(G) \ge 1 + \kappa. 
\end{align}
\end{thm}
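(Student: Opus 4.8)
The plan is to show that any quantum $c$-coloring forces $\mu_{\max}(A) + \sum_{i=1}^{c-1}\mu_i^\uparrow(A) \le 0$; since $\kappa$ is the smallest integer for which the displayed inequality holds, this immediately yields $\kappa \le c-1$, hence $\chi_q(G) = c \ge 1+\kappa$. So I would fix a quantum $c$-coloring and pass, via Theorem \ref{thm:quantum_pinching}, to the block-diagonal projectors $P_k$ and the pinching identity $\sum_{k\in[c]} P_k (A\otimes I_d) P_k = 0$, writing $B = A\otimes I_d$ throughout.

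First I would record the classical mechanism ($d=1$), since the quantum argument must mimic it. There one takes a unit top eigenvector $x$ of $A$, restricts it to the $c$ colour classes to get orthogonal vectors $w_1,\dots,w_c$ with $\sum_k w_k = x$, and forms $S\in\C^{n\times c}$ whose columns are the normalised $w_k$. Independence of the colour classes makes the diagonal of $C = S^\dagger A S$ vanish, so $\mathrm{tr}(C)=0$; Corollary \ref{cor:eig_eq2} with $i=1$ together with $x\in\mathrm{range}(S)$ gives $\mu_{\max}(C)=\mu_{\max}(A)$; and Lemma \ref{lem:eig_eq1} gives $\mu_i^\uparrow(C)\ge\mu_i^\uparrow(A)$. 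Combining, $0=\mathrm{tr}(C)=\mu_{\max}(A)+\sum_{i=1}^{c-1}\mu_i^\uparrow(C)\ge \mu_{\max}(A)+\sum_{i=1}^{c-1}\mu_i^\uparrow(A)$, as wanted.

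For the quantum case the natural move is to take $w = x\otimes\xi$ (a top eigenvector of $B$, for any unit $\xi\in\C^d$), set $w_k = P_k w$, and build the analogous $c\times c$ compression $C = S^\dagger B S$. The vanishing of the diagonal blocks $P_k B P_k$ again forces $\mathrm{tr}(C)=0$, and $w\in\mathrm{range}(S)$ again gives $\mu_{\max}(C)=\mu_{\max}(A)$. The difficulty is that Lemma \ref{lem:eig_eq1} now only interlaces $C$ against the spectrum of $B=A\otimes I_d$, whose eigenvalues carry multiplicity $d$: when $d\ge c-1$ the $c-1$ smallest eigenvalues of $B$ all equal $\mu_1^\uparrow(A)$, so interlacing yields only $\sum_{i=1}^{c-1}\mu_i^\uparrow(C)\ge (c-1)\mu_1^\uparrow(A)$, which is no stronger than the ordinary Hoffman bound. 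Recovering the \emph{distinct} small eigenvalues $\mu_1^\uparrow(A),\dots,\mu_{c-1}^\uparrow(A)$ — rather than $c-1$ copies of the most negative one — is the crux, and it is exactly here that the entanglement permitted by a genuine quantum colouring must be controlled.

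I expect the resolution to exploit that each $P_k=\sum_v e_v e_v^\dagger\otimes P_{v,k}$ is block-diagonal over vertices and so acts only on the $d$-dimensional fibres. Concretely, I would reformulate the target via Ky Fan's principle: it suffices to produce a positive semidefinite $\sigma\in\C^{n\times n}$ with $\sigma\le I_n$ in the Loewner order, $\mathrm{tr}(\sigma)=c-1$, and $\mathrm{tr}(A\sigma)\le-\mu_{\max}(A)$, because the extreme points of $\{\,0\le\sigma\le I_n:\mathrm{tr}\,\sigma=c-1\,\}$ are the rank-$(c-1)$ projectors, whose minimal value of $\mathrm{tr}(A\,\cdot\,)$ is precisely $\sum_{i=1}^{c-1}\mu_i^\uparrow(A)$. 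Such a $\sigma$ should arise as a partial trace over $\C^d$ of an operator assembled from the pinching projectors and the top eigenspace of $B$; the identity $\sum_k P_kBP_k=0$ supplies the estimate $\mathrm{tr}(A\sigma)\le-\mu_{\max}(A)$, while the vertex-block-diagonal form of the $P_k$ is what should guarantee the operator bound $\sigma\le I_n$ rather than merely $\sigma\le d\,I_n$ (the latter would again collapse to the classical bound). Verifying this Loewner bound is the step I expect to be the main obstacle; once it is in hand, Ky Fan's principle closes the proof at once.
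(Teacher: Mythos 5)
Your write-up correctly isolates the crux — compressing a single top eigenvector $x\otimes\xi$ through the $c$ projectors and interlacing against $A\otimes I_d$ only sees the $d$-fold degenerate bottom of the spectrum and collapses to the ratio bound — but it then stops at exactly that point. The proposed repair (a positive semidefinite $\sigma$ with $\sigma\le I_n$, $\mathrm{tr}\,\sigma=c-1$ and $\mathrm{tr}(A\sigma)\le-\mu_{\max}(A)$, to be fed into Ky Fan's principle) is stated as a hope, with the decisive operator inequality $\sigma\le I_n$ explicitly flagged as unverified. As written, the proposal is a plan with its central step missing, not a proof.

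The paper resolves the multiplicity problem differently and quite cheaply: it uses \emph{all} $d$ top eigenvectors $z\otimes f_j$ of $A\otimes I_d$, forms $\mathcal{S}=\mathrm{span}\{P_k(z\otimes f_j)\}$ of dimension $m$ with $d\le m\le cd$, and picks an orthonormal basis adapted to $\mathcal{S}=\bigoplus_k\mathcal{S}_k$ so that the compression $S^\dagger(A\otimes I_d)S$ has zero diagonal while carrying $\mu_{\max}(A)$ with multiplicity $d$. The trace identity then gives $0\ge\sum_{i=1}^{m-d}\mu_i^\uparrow(A\otimes I_d)+d\,\mu_{\max}(A)$ with $m-d\le(c-1)d$, and since every eigenvalue of $A$ occurs exactly $d$ times in $A\otimes I_d$, minimality of $\kappa$ forces the threshold $K_d$ to satisfy $K_d\ge(\kappa-1)d+1$, whence $c-1\ge\lceil K_d/d\rceil=\kappa$. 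In short, the $d$-fold degeneracy at the bottom is cancelled by a matching $d$-fold copy of $\mu_{\max}$ at the top plus an integrality (ceiling) argument; no duality is needed. For what it is worth, your Ky Fan route is salvageable: with $Q=\Pi_{\mathcal{S}}-\Pi_{\mathcal{R}}$, where $\mathcal{R}=\mathrm{span}\{z\otimes f_j\}$, one has $\mathrm{tr}((A\otimes I_d)Q)=-d\,\mu_{\max}(A)$ and $\mathrm{rank}(Q)\le(c-1)d$, and $\sigma=\frac{1}{d}\mathrm{tr}_{\C^d}(Q)$ satisfies $\sigma\le I_n$ \emph{automatically}, because the partial trace is order-preserving and $Q\le I_{nd}$ — the vertex-block structure of the $P_k$ you invoke is not what is needed; the remaining wrinkle is that $\mathrm{tr}\,\sigma=(m-d)/d$ may be non-integral and strictly less than $c-1$, so one must use the version of Ky Fan's principle over $\{0\le\sigma\le I_n,\ \mathrm{tr}\,\sigma\le c-1\}$, which still yields $\kappa\le c-1$. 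Since none of this is carried out in your proposal, it does not yet establish the theorem.
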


\begin{proof}
Let $\{P_{v,k} : v\in V, k\in[c]\}$ be any quantum $c$-coloring in dimension $d$. Construct the corresponding collection $\{P_k : k\in[c]\}$ of block-diagonal projectors as in Theorem~\ref{thm:quantum_pinching}.

Let $z\in\C^n$ denote the unique eigenvector of $A$ corresponding to the largest eigenvalue $\mu_{\max}(A)$. Let $f_j\in\C^d$ for $j\in[d]$ denote the standard basis vectors. Let $s_1,\ldots,s_m$ be an orthonormal
basis of the subspace
\begin{align}\label{eq:span}
    \mathcal{S} = \mathrm{span} \{ P_k (z \otimes f_j) : k \in [c], j \in [d] \}.
\end{align} 
Its dimension $m$ satisfies
\begin{align}\label{Ineq:m-bounds}
    d \le m \le c d.
\end{align}
For the lower bound, observe that the $d$ orthogonal vectors $z\otimes f_j$ are contained in $\mathcal{S}$ since 
\begin{align}
    z\otimes f_j = (I_n \otimes I_d) \Big( z \otimes f_j \Big) = 
    \big( \sum_{k\in [c]} P_k \big) (z \otimes f_j) = \sum_{k\in [c]} P_k (z \otimes f_j).
\end{align}
For the upper bound, observe that there are exactly $cd$  vectors in eq.~(\ref{eq:span}) and, thus, $m$ cannot be larger than $cd$.

Let $S\in\C^{nd\times m}$ be the matrix with $s_1,\ldots,s_m$ as column vectors. The following two arguments show that the largest eigenvalue of the matrix $S^\dagger(A\otimes I_d)S$ is equal to $\mu_{\max}(A)$ and its multiplicity is equal to $d$. First, there exist $d$ orthogonal vectors $y_1,\ldots, y_d\in\C^m$ such that $S y_j = z\otimes f_j$ since the latter vectors are contained in the subspace $\mathcal{S}$, or equivalently, the column space of $S$. We have
\begin{align}
    S^\dagger(A\otimes I_d) S y_j &= S^\dagger (A\otimes I_d) (z\otimes f_j) \\
    &= \mu_{\max}(A) S^\dagger (z\otimes f_j) \\
    &= \mu_{\max}(A) S^\dagger S y_j \\
    &= \mu_{\max}(A) y_j.
\end{align}
Second, using Corollary \ref{cor:eig_eq2}, the largest eigenvalue of $S^\dagger(A\otimes I_d)S$ cannot be greater than the largest eigenvalue of $A\otimes I_d$.

We can always choose the orthonormal basis vectors $s_1,\ldots,s_m$ such that for each $i\in [m]$ there exists a unique $k_i\in [c]$ with
\begin{equation}\label{eq:unique}
    P_{k_i} s_i = s_i \mbox{ and } P_k s_i = 0 \mbox{ for all } k\neq k_i.
\end{equation}
This is because $\mathcal{S}=\bigoplus_{k\in[c]} \mathcal{S}_k$, where $\mathcal{S}_k = \mathrm{span}\{P_k(z\otimes f_j) : j \in [d]\}$ since the projectors $P_k$ form a resolution of the identity.  

We now see that the diagonal entries of the matrix $S^\dagger(A\otimes I)S$ must 
all be zero since
\begin{equation}
    (S^\dagger(A\otimes I)S)_{ii} = s_i^\dagger (A\otimes I_d) s_i = s_i^\dagger P_{k_i} (A \otimes I_d) P_{k_i} s_i = 0.
\end{equation}
For the last equality we  used that $P_k(A\otimes I_d)P_k=0$ for all $k\in[c]$. 

So using Lemma~\ref{lem:eig_eq1}, we obtain
\begin{align*}
    0 
    &= \mathrm{tr}(S^\dagger(A\otimes I)S) \\
    &= \sum_{i=1}^m \mu_i^{\uparrow}(S^\dagger(A\otimes I) S) \\
    &= \sum_{i=1}^{m-d} \mu_i^{\uparrow}(S^\dagger(A\otimes I) S)  + d \cdot \mu_{\max}(A) \\
    &\ge \sum_{i=1}^{m-d} \mu_i^{\uparrow}(A\otimes I) + d \cdot \mu_{\max}(A).
\end{align*}
Now let $\kappa_d$ be the smallest integer such that
\begin{equation}
    0 \ge \sum_{i=1}^{\kappa_d} \mu_i^{\uparrow}(A\otimes I) + d \cdot \mu_{\max}(A).
\end{equation}
Using (\ref{Ineq:m-bounds}), we have
\begin{equation}
    (c - 1) d = cd - d \ge m-d \ge \kappa_d.
\end{equation}
Note that $\kappa_d \ge (\kappa - 1)d + 1$ must hold because otherwise the condition that $\kappa = \kappa_1$ is minimal would be violated.
This implies $c - 1 \ge \lceil \kappa_d / d \rceil = \lceil \kappa - 1 + 1/d \rceil = \kappa$.
In particular, this hold for a quantum $c$-coloring attaining $\chi_q(G)$ so that $\chi_q(G)\ge 1 + \kappa$.
\end{proof}

A weaker version of Theorem~\ref{thm:firstbound}, with $\chi(G)$ replacing $\chi_q(G)$, was proved by Hoffman \cite{hoffman70} in 1970. This theorem immediately implies that 
\begin{equation}
1 + \frac{\mu_1}{|\mu_n|} \le \chi_q(G) \le \chi(G),
\end{equation}
which was proved in \cite{elphick19} using different techniques.

The proof of the following bound generalises a proof due to Haemers (\cite{haemers79}, \cite{haemers95}) from the classical to the quantum chromatic number.
\subsection{Second new bound for \texorpdfstring{$\chi_q(G)$}{Lg}}

\begin{thm}[Second bound on quantum chromatic number]\label{thm:secondbound}
For any connected graph $G$ with $\mu_2 >0$:
\begin{equation}
\chi_q(G) \ge 1 + \min \left\{ g, \frac{|\mu_n(A)|}{\mu_2(A)} \right\},
\end{equation}
where $g$ is the multiplicity of $\mu_n(A)=\mu_{\min}(A)$.
\end{thm}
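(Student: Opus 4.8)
The plan is to prove the logically equivalent statement that either $\chi_q(G) - 1 \ge g$ or $\chi_q(G) - 1 \ge |\mu_n|/\mu_2$; since the asserted lower bound is $1$ plus the \emph{minimum} of these two quantities, establishing either inequality suffices. Accordingly, fix an optimal quantum $c$-coloring with $c = \chi_q(G)$, realized by block-diagonal projectors $P_k$ in dimension $d$ as in Theorem~\ref{thm:quantum_pinching}. If $c \ge g+1$ we are immediately done, so I will assume from now on that $c \le g$ and derive the ratio bound $c - 1 \ge |\mu_n|/\mu_2$ in this case.

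The crux is to locate a single vector in the least-eigenvalue eigenspace of $A \otimes I_d$ whose colour-class components are all orthogonal to the Perron direction. Let $z$ be the Perron eigenvector of $A$ (unique up to scale since $G$ is connected), let $w_1,\dots,w_g$ be an orthonormal basis of the $\mu_n$-eigenspace of $A$, and let $f_1,\dots,f_d$ be the standard basis of $\C^d$. The vectors $w_l \otimes f_j$ span the $\mu_n$-eigenspace $\mathcal{V}$ of $A \otimes I_d$, of dimension $gd$, while the vectors $z \otimes f_j$ span its Perron eigenspace, of dimension $d$. I would consider, for $k \in [c]$ and $j \in [d]$, the $cd$ linear functionals $\psi \mapsto (z \otimes f_j)^\dagger P_k \psi$ on $\mathcal{V}$ and seek a nonzero $\psi$ in their common kernel. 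Because $\sum_k P_k = I$ and $\mathcal{V}$ is orthogonal to every $z \otimes f_j$, for each fixed $j$ the $c$ functionals sum to the zero functional on $\mathcal{V}$; these $d$ relations are independent, so the functionals have rank at most $(c-1)d$. Their common kernel therefore has dimension at least $gd - (c-1)d = (g-c+1)d \ge d > 0$ precisely when $c \le g$, which is exactly the case under consideration. Fix such a nonzero $\psi$: it satisfies $(A \otimes I_d)\psi = \mu_n \psi$ together with $P_k \psi \perp (z \otimes f_j)$ for all $k,j$. This dimension count, and its reliance on the hypothesis $c \le g$, is the step I expect to be the main obstacle, since it is what forces the minimum with $g$ to appear and what allows $\mu_2$ rather than $\mu_{\max}$ to govern the bound.

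With $\psi$ in hand, set $x_k = P_k \psi$. The ranges of the orthogonal projectors $P_k$ are mutually orthogonal, so the $x_k$ are pairwise orthogonal with $\sum_k \|x_k\|^2 = \|\psi\|^2$; moreover each $x_k$ is orthogonal to the entire Perron eigenspace of $A \otimes I_d$ by construction. Discarding the vanishing $x_k$ and normalizing the rest to $y_k = x_k/\|x_k\|$ yields $c' \le c$ orthonormal vectors lying in the orthogonal complement $\mathcal{W}$ of the Perron eigenspace. I would then study the $c' \times c'$ compression $\tilde A = Y^\dagger (A \otimes I_d) Y$, where $Y$ has the $y_k$ as columns. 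It has three features: its diagonal vanishes, since $\tilde A_{kk} = \|x_k\|^{-2}\,\psi^\dagger P_k (A \otimes I_d) P_k \psi = 0$ by $P_k (A \otimes I_d) P_k = 0$; all of its eigenvalues are at most $\mu_2$, because $A \otimes I_d$ preserves $\mathcal{W}$, connectedness makes $\mu_{\max}$ simple so that its restriction to $\mathcal{W}$ has largest eigenvalue exactly $\mu_2$, and Corollary~\ref{cor:eig_eq2} bounds the top eigenvalue of a compression; and, writing $a_k = \|x_k\|$, the weighted quadratic form satisfies $a^\dagger \tilde A a = \sum_{k \ne k'} x_k^\dagger (A \otimes I_d) x_{k'} = \psi^\dagger (A \otimes I_d)\psi = \mu_n \|a\|^2$, so that $\mu_{\min}(\tilde A) \le \mu_n$.

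Finally I would combine these three facts. Since $\mathrm{tr}(\tilde A) = 0$, its smallest eigenvalue is at most $\mu_n = -|\mu_n|$, and its remaining $c'-1$ eigenvalues are each at most $\mu_2$, we obtain $0 = \sum_i \mu_i(\tilde A) \le -|\mu_n| + (c'-1)\mu_2$. As $\mu_2 > 0$, this rearranges to $c - 1 \ge c' - 1 \ge |\mu_n|/\mu_2$, which settles the case $c \le g$ and completes the argument.
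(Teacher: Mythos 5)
Your proposal is correct and follows essentially the same route as the paper's proof: a vector in the bottom eigenspace whose pinched components $P_k\psi$ are orthogonal to the Perron directions $z\otimes f_j$, a zero-diagonal compression onto the normalized nonzero components, and a trace argument pairing one eigenvalue $\le \mu_n$ with the rest $\le \mu_2$. The only (equivalent) presentational differences are that you locate $\psi$ inside the exact $\mu_n$-eigenspace via a rank count on linear functionals rather than intersecting $\mathcal{S}^\perp$ with the span of the $cd$ smallest eigenvectors, and you enforce the $\mu_2$ bound by restricting to the Perron orthocomplement instead of subtracting $\Delta\, zz^\dagger\otimes I_d$.
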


\begin{proof}
Consider an arbitrary quantum $c$-coloring in dimension $d$. Assume that $c \le g$.

Let $\mathcal{S}$ be defined as in the proof of the previous theorem in (\ref{eq:span}). Let $\mathcal{T}$ be the subspace spanned by the eigenvectors corresponding to the $cd$ smallest eigenvalues $\mu^{\uparrow}_1(A\otimes I),\ldots,\mu^{\uparrow}_{cd}(A\otimes I)$. We now show that there
exists a non-zero unit vector $y$ with
\begin{equation}
    y \in \mathcal{S}^\perp \cap \mathcal{T}.
\end{equation}
To this end, define $\mathcal{R} = \mathrm{span}\{ z \otimes f_j : j \in [d] \}$. Observe that both $\mathcal{S}^\perp$ and $\mathcal{T}$ are contained in the subspace $\mathcal{R}^\perp$ and 
\begin{equation}
    \dim \mathcal{R}^\perp = 
    nd - d < nd = 
    (nd - cd) + cd \le 
    \dim \mathcal{T} + \dim \mathcal{S}^\perp.
\end{equation}
For $i\in [c]$, define $y_i = P_i y$. Let $m$ be the number of $y_i$ that are non-zero. We now show that at least two of them (w.l.o.g.~$y_1$ and $y_2$) must be non-zero, that is, $m\ge 2$.  First of all, at least one must be non-zero because otherwise we would have $0\neq y = (I_n\otimes I_d) y = \big(\sum_{k\in[c]} P_k\big) y = \sum_{k\in[c]} y_k = 0$. Now assume that only $y_1$ were non-zero, or equivalently, $y=P_1 y$. But this leads to the contradiction
\begin{equation}
    0 > \mu^\uparrow_{c}(A) = \mu^\uparrow_{cd}(A\otimes I_d) \ge y^\dagger(A \otimes I_d) y = y^\dagger P_1(A\otimes I_d) P_1 y = 0,
\end{equation}
where the first inequality holds because $c\le g$, the second inequality holds because $y\in \mathcal{T}$, and the last equality holds because $P_1 (A\otimes I_d) P_1=0$. The latter follows from the fact that $P_1$ is one of the projectors forming a pinching that annihilates $A\otimes I_d$.

Define the orthonormal vectors $s_i = y_i / \|y_i\|$ for $i\in [m]$ and $S$ to be the matrix whose columns are $s_i$. Define the matrix $X=A\otimes I_d - \Delta \cdot z z^\dagger \otimes I_d$, where $\Delta = \mu_{\max} - \mu_{\min}$ and $z$ is the (unit) eigenvector corresponding to $\mu_{\max}$. 

Since $y$ is in the column space of $S$, the smallest eigenvalue of $S^\dagger X S$ is at most $y^\dagger X y$, which in turn is at most $\mu^\uparrow_{cd}(A\otimes I_d) = \mu^{\uparrow}_c(A)=\mu_n(A)$ as $y\in\mathcal{T}$ and $c\le g$. Also, it holds that $\mu_{\max}(S^\dagger X S)\le\mu_{\max}(X)=\mu_2(A)$.

We now show that the trace of $S^\dagger X S$ is equal to $0$. The diagonal entries of $S^\dagger X S$ are all zero because 
\begin{align}
    (S^\dagger X S)_{ii} 
    &= 
    s_i^\dagger X s_i \, \propto \, y_i^\dagger X y_i \\
    &=
    y^\dagger P_i \big(A\otimes I_d - \Delta \cdot z z^\dagger \otimes I_d\big) P_i y \\
    &= 
    y^\dagger P_i (A\otimes I_d) P_i y - \Delta \cdot \sum_{j\in [d]} y^\dagger P_i (z z^\dagger \otimes f_j f_j^\dagger) P_i y \\
    &= 0
\end{align}
where we used that $P_i(A\otimes I_d) P_i=0$ and $y\perp P_i(z\otimes f_j)$ for each $j$. The latter holds as $y\in\mathcal{S}^\perp$ and $P_i(z\otimes f_j)\in\mathcal{S}$.

Combining that $S^\dagger X S$ is traceless with the above bounds on its minimum and maximum eigenvalues yields that
\begin{equation}
0 = \mathrm{tr}(S^\dagger X S) = \sum_{i\in[m]} \mu^\uparrow_i(S^\dagger X S) \le \mu_n(A) + (m-1) \mu_2(A),
\end{equation}
which completes the proof.
\end{proof}

A weaker version of this bound,  with $\chi(G)$ replacing $\chi_q(G)$, is already known, for example in Corollary 3.6.4 in \cite{brouwer10}.

We note that both Theorems are also valid for weighted adjacency matrices of the form $W \circ A$, where $W$ is an arbitrary Hermitian matrix and $\circ$ denotes the Hadamard product (also called the Schur product).  An example of using a weighted adjacency matrix is to replace $A$ with the normalized adjacency matrix $\mathcal{A} = D^{-1/2} A D^{-1/2}$, where $D$ is the diagonal matrix of vertex degrees, in both bounds for $\chi_q(G)$. This choice of weight matrix reproduces the lower bound for $\chi(G)$ in Theorem 2.2  in Coutinho \emph{et al} \cite{coutinho19}, once account is taken of the differences in notation.

\section{Implications for quantum chromatic number}

\subsection{Strongly regular graphs (SRGs)}

Elphick and Wocjan \cite{elphick19} discussed implications of their results for the quantum chromatic number. For example they demonstrated using an inertial bound that the Clebsch graph has $\chi_q(G) = 4$. Since the Clebsch graph has spectrum $(5^1, 1^{10}, -3^5)$ this also follows immediately from Theorem \ref{thm:secondbound}. The generalised quadrangle, $GQ(2,4)$, on 27 vertices has $\chi = 6$, but in \cite{elphick19} the authors were only able to show that $\chi_q \ge 5$. The spectrum of $GQ(2,4)$ is $(10^1, 1 ^{20}, -5^6)$, so from Theorem \ref{thm:secondbound} it follows that $\chi_q = 6$. 

Both of these graphs are strongly regular, and Theorems \ref{thm:firstbound} and 3 can be used to calculate  the quantum chromatic number of many strongly regular graphs (SRGs). For example the Kneser graph $K_{p,2}$ (with  $p \ge 4$) has $\chi = p - 2$ and spectrum $((p-2)(p-3)/2^1, 1^{p(p-3)/2}, (3 - p)^{p-1})$, which using Theorem \ref{thm:secondbound} implies $\chi_q = \chi = p - 2$. The Hoffman-Singleton graph, SRG(50, 7, 0, 1) has spectrum$(7^1, 2^{28}, -3^{21})$ and $\chi = 4$. Theorem \ref{thm:firstbound} implies $\chi_q = 4$ also.

Fiala and Haemers \cite{fiala06} identified (see their Theorem 10.1) all SRGs with $\chi = 5$. So, using Theorem \ref{thm:firstbound}, SRG$(15, 8, 4, 4$) and  SRG($25, 8, 3, 2$) have $\chi_q = 5$; and using Theorem \ref{thm:secondbound}, SRG($21, 10, 3, 6$) and SRG($25, 16, 9, 12$) have $\chi_q = 5$. The Gewirtz graph, SRG($56, 10, 0 2$), has $\chi = 4$ and spectrum $(10^1, 2^{35}, -4^{20})$; so using Theorem \ref{thm:firstbound} it has $\chi_q = 4$. 

The Higman-Sims graph is SRG($100, 22, 0, 6$). Its spectrum is equal to $(22^1, 2^{77}, -8^{22})$ and it has $\chi = 6$ (see \cite{fiala06}). Theorem \ref{thm:firstbound} implies $\chi_q \ge 4$ and Theorem \ref{thm:secondbound} implies $\chi_q \ge 5$. We do not however know whether $\chi_q = 5$ or $6$. Similarly the $M_{22}$ graph is SRG($77, 16, 0 4$) with spectrum $(16^1, 2^{55}, -6^{21})$ has $\chi = 5$ (see \cite{fiala06}). Theorems \ref{thm:firstbound} and \ref{thm:secondbound} imply $\chi_q \ge 4$, but we do now know whether $\chi_q = 4$ or $5$.

\subsection{Non-SRGs}\label{sec:orth-graph}

The orthogonality graph, $\Omega(n)$, has vertex set the set of $\pm1-$vectors of length $n$, with two vertices adjacent if they are orthogonal. With $4|n$ (see \cite{mancinska16}), it is known that $\chi_q(\Omega(n)) = n$ but $\chi(\Omega(n))$ is exponential in $n$.  A proof that $\chi_q(\Omega(n)) = n$ is as follows. It is immediate from the definition of $\Omega(n)$ that $\xi'(\Omega(n)) = n$, and it is known that $\chi_q(G) \le \xi'(G)$, where $\xi'(G)$ is the normalized orthogonal rank of $G$ \cite{wocjan19}. However, using Theorem \ref{thm:firstbound}  and results in section 4.3 of \cite{godsil06} on the eigenvalues of orthogonality graphs we have that:

\[
\chi_q(\Omega(n)) \ge 1 + \frac{\mu_1}{|\mu_n|} = 1 + \frac{1\cdot 3 \cdots (n-3) \cdot (n-1)}{1 \cdot 3 \cdots (n - 3)} = n.
\]

\begin{thm}\label{thm:dimension}
The orthogonality graph $\Omega(n)$ has a quantum coloring in dimensions $pn$, where $p$ is a positive integer.
\end{thm}

\begin{proof}
We can construct a quantum coloring of $\Omega(n)$ using $n$ colors as follows.  Let dimension $d = n$, let $U = \mathrm{diag}(1, \omega, \ldots, \omega^{n-1})$ be a unitary matrix where $\omega = e^{2\pi i/n}$, and let $z_v$ denote the $\pm1$ vector of length $n$ assigned to vertex $v$. Then let

\[
P_{v,k} = U^k z_v z_v^\dagger (U^\dagger)^k: v\in V, k \in [c].
\]

It is straightforward that this collection of orthogonal projectors satisfy the completeness and orthogonality conditions in Definition \ref{def:Q-coloring}, so this completes the quantum coloring with $d = n$.

For $d=pn$, where $p>1$, let 
  \[
  \widetilde{P}_{v,k}=P_{v,k}\otimes I_p: v\in V, k\in [c].
  \]
  This new collection of orthogonal projectors also satisfy the completeness and orthogonality conditions in Definition \ref{def:Q-coloring}.
\end{proof}
We note in passing that with $4|n$, a proof that $\omega(\Omega(n)) = n$, where $\omega(G)$ denotes the clique number of $G$, would provide a proof of the Hadamard Conjecture, which dates from 1867.

Vertex Transitive($12, 27$) and Vertex Transitive($12, 54$) are examples of non-SRGs for which Theorem \ref{thm:secondbound} is exact with $\chi_q = 4$. Barbell graphs and irregular complete $q-$partite graphs have $\chi_q = \chi$, using Theorem \ref{thm:firstbound}.

\subsection{Hoffman colorings}

Any graph for which
\[
1 + \frac{\mu_1}{|\mu_n|} = \chi(G),
\]
is said to have a Hoffman coloring. All such graphs therefore have $\chi_q(G) = \chi(G)$. Examples include SRG($49, 12, 5, 2$) which has $\chi_q = 7$ and the Schlafli graph SRG($27, 16, 10, 8$) which has $\chi_q = 9$. Haemers and Touchev investigated graphs with Hoffman colorings, and Table 1 in \cite{haemers96} lists many such SRGs with up to 100 vertices.

\section{Open questions}

The pentagon $(C_5)$ demonstrates that both of the bounds in this paper are not lower bounds for the vector chromatic number or the fractional chromatic number. The orthogonal rank, $\xi(G)$, is incomparable to $\chi_q(G)$.  We do not know whether $\xi(G)$ can replace $\chi_q(G)$ in Theorems \ref{thm:firstbound} and \ref{thm:secondbound}.

We have shown that the Kneser graph $K_{p,2}$ has $\chi_q = \chi$, but is this true for all Kneser graphs? Are there any strongly regular graphs with $\chi_q < \chi?$

In Definition \ref{def:Q-coloring} the dimension $d$ is any finite positive integer. Let $\chi_d(G)$ denote the smallest $c$ for which graph $G$ admits a quantum $c$-coloring in dimension $d$. From  Theorem~\ref{thm:dimension}, we know that $\chi_1(\Omega(n)) = \chi(\Omega(n))$ which is exponential in $n$, but $\chi_{pn}(\Omega(n)) = n$ for $p$ a positive integer. This raises the question of what is the value of $\chi_d(\Omega(n))$ for $d \not = pn?$ In particular does $\chi_{n+1}(\Omega(n)) = n?$

 \subsection*{Acknowledgement}
 
 We would like to thank David Roberson for insightful comments on an earlier version of this paper.


\begin{thebibliography}{9}

\bibitem{brouwer10}
A. E. Brouwer and W. H. Haemers, \emph{Spectra of Graphs}, Springer, (2010). 

\bibitem{cameron07}
P. J. Cameron, A. Montanaro, M. W. Newman, S. Severini and A. Winter, \emph{On the quantum chromatic number of a graph}, Elec. J. Combinatorics, 14, (2007), R81.

\bibitem{coutinho19}
G. Coutinho, R. Grandsire, C. Passsos, \emph{Colouring the normalized Laplacian}, Elec. Notes in Theoretical Computer Science, 346, (2019), 345 - 354. 

\bibitem{elphick19}
C. Elphick and P. Wocjan, \emph{Spectral lower bounds for the quantum chromatic number of a graph}, J. Combinatorial Theory Ser. A, 168, (2019), 338 - 347.

\bibitem{fiala06}
N. C. Fiala and W. H. Haemers, \emph{5-chromatic strongly regular graphs}, Discrete Math., 306, (2006), 3083 - 3096. 

\bibitem{godsil06}
C. Godsil, \emph{Interesting graphs and their colourings}, (2006). 

\bibitem{haemers79}
W. Haemers, \emph{Eigenvalue techniques in design and graph theory}, Mathematisch Centrum, Amsterdam, (1979). 

\bibitem{haemers95}
W. H. Haemers, \emph{Interlacing eigenvalues and graphs}, Lin. Alg. Appl., 226 - 228, (1995), 593 - 616. 

\bibitem{haemers96}
W. H. Haemers and V. D. Touchev, \emph{Spreads in strongly regular graphs}, Designs, Codes and Cryptography, 8, (1996), 145 - 157. 

\bibitem{hoffman70}
A. J. Hoffman, \emph{On eigenvalues and colorings of graphs}, in: Graph Theory and its Applications (B. Harris ed.) Acad. Press, New York, (1970).

\bibitem{mancinska162}
L. Man\v{c}inska and D. E. Roberson, \emph{Oddities of Quantum Colorings}, Baltic J. Modern Computing, 4, (2016), 846 - 859.

\bibitem{mancinska16}
L. Man\v{c}inska and D. E. Roberson, \emph{Graph homomorphisms for quantum players}, J. Combinatorial Theory, Ser. B, 118, (2016), 228 - 267.

\bibitem{wocjan19}
P. Wocjan and C. Elphick, \emph{Spectral lower bounds for the orthogonal and projective ranks of a graph},  Elec. J. Combinatorics, 26(3), (2019), P3.45.

\end{thebibliography}
\end{document}